\documentclass{birkmult}
\usepackage{amsmath, amsthm, amsfonts, amssymb, mathrsfs}
\usepackage{graphicx,xcolor}

\usepackage{mathtools}
\usepackage{enumerate}
\usepackage{enumitem}
\usepackage{marginnote}
\usepackage{caption}
\usepackage{subcaption}
\setcounter{tocdepth}{1}
\newtheorem{theorem}{Theorem}[section]
\newtheorem{lemma}[theorem]{Lemma}
\newtheorem{proposition}[theorem]{Proposition}

\theoremstyle{remark}
\newtheorem{remark}[theorem]{Remark}
\theoremstyle{definition}

\newtheorem{ass}{Assumption}

\newcommand{\cH}{{\mathcal{H}}}
\def\cA{{\mathcal A}}

\def\CC{{\mathbb C}}
\def\RR{{\mathbb R}}

\def\NN{{\mathbb N}}

\def\la{{\lambda}}
\def\d{\,{\rm{d}}}
\newcommand\sess{\sigma_{\rm ess}}

\newcommand{\supp}{\operatorname{supp}} 
\newcommand\ds{\displaystyle}

\newcommand{\mydot}{\, \cdot \,}
\DeclareMathOperator*{\esssup}{ess\,sup}
\DeclareMathOperator*{\essinf}{ess\,inf}
\DeclareMathOperator*{\dist}{dist}
\DeclareMathOperator*{\ran}{ran}
\DeclareMathOperator*{\essran}{ess\,ran}
\begin{document}
\author{Orif \,O.\ Ibrogimov}
\address
	{
	Department of Mathematics, 
	University College London,
	Gower Street, London,
	WC1E 6BT, UK}
\email{o.ibrogimov@ucl.ac.uk}
\author{Christiane Tretter}
\address
	{
	Mathematisches Institut, 
	Universit\"{a}t Bern,
	Sidlerstrasse.\!~5,
	3012 Bern, Switzerland}
\email{tretter@math.unibe.ch}
\title[Spectrum of an operator matrix in truncated Fock space]
{On the spectrum of an operator \\ in truncated Fock space}
\dedicatory{To Prof.\,Dr.\,Dr.h.c.\,mult.\ Heinz Langer with admiration}
\thanks{The authors gratefully acknowledge the support of the \emph{Swiss National Science Foundation,} SNF, grant no.\ $200020\_146477$; the first author also gratefully acknowledges the support of SNF Early Postdoc.Mobility grant no.\ $168723$ and thanks the Department of Mathematics at University College London for the kind hospitality.}
\subjclass{81Q10, 47G10, 47N50} 
\keywords{Operator matrix, Schur complement, Fock space, spin-boson model, essential spectrum, singular sequence, discrete spectrum, Birman-Schwinger principle, Weyl inequality.}
\date{\today}
\begin{abstract}
We study the spectrum of an operator matrix arising in the spectral analysis of the energy operator of the spin-boson model of radioactive decay with two bosons on the torus. An analytic description of the essential spectrum is established. 
Further, a criterion for the finiteness of the number of eigenvalues below the bottom of the essential spectrum is derived.
\end{abstract}

\maketitle

\section{Introduction}

In this paper we study the essential spectrum and discrete spectrum of the tridiagonal operator matrix 
\begin{equation}\label{bom:H}
H:=
\begin{pmatrix}
H_{00} & H_{01} & 0\\[1ex]
H_{01}^* & H_{11} & H_{12}\\[1ex]
0 & H_{12}^* & H_{22}\\[1ex]
\end{pmatrix}
\end{equation}
in the so-called truncated Fock space 
\[
\cH:=\cH_0\oplus\cH_1\oplus\cH_2
\]
with $\cH_0:=\CC$, $\cH_1:=L^2(\Omega,\CC)$ and $\cH_2:=L^2_{\rm sym}(\Omega^2,\CC)$. 
Here $\Omega$ is a $d$-dimensional open cube $(-a,a)^d$, $d\in\NN$, $a\in(0,\infty)$, and $L^2_{\rm sym}(\Omega^2,\CC)$ stands for the subspace of $L^2(\Omega^2,\CC)$ consisting of symmetric functions (with respect to the two variables).
The operator entries $H_{ij}: \cH_j \to \cH_i$, $|i-j|\leq1$, $i,j=0,1,2$, are given \vspace{-0.5mm} by 
\begin{align}
\nonumber
&H_{00} f_0 = w_0 f_0, \quad  H_{01}f_1 = \int\limits_{\Omega} v_0(s)f_1(s) \d s,\\[-1mm] \label{def.op.entries}
&(H_{11} f_1)(x) = w_1(x)f_1(x), \quad (H_{12}f_2)(x)=\int\limits_{\Omega}v_1(x,s)f_2(x,s) \d s,\\[1mm] \nonumber
&(H_{22}f_2)(x,y) = w_2(x,y)f_2(x,y), \\[-1.5mm] \nonumber
\end{align}
for almost all (a.a.) $x,y\in\Omega$ with parameter functions satisfying certain rather weak conditions to be specified below.

Operator matrices of this form play a key role for the study of the energy operator of the spin-boson Hamiltonian with two bosons on the torus. In fact, the latter is a $6\times6$ operator matrix which is unitarily equivalent to a $2\times 2$ block diagonal operator with two copies of a particular case of $H$ on the diagonal, see e.g.\ \cite[Section~III]{MNR-2016-1D}. Consequently, the essential spectrum and finiteness of discrete eigenvalues of the spin-boson Hamiltonian are determined by the corresponding 
spectral information on the operator matrix $H$ in \eqref{bom:H}.

Independently of whether the underlying domain is a torus or the whole space~$\RR^d$, the full spin-boson Hamiltonian is an 
infinite operator matrix in Fock space for which rigorous results are very hard to obtain. One line of attack is to consider 
the compression to the truncated Fock space with a finite number $N$ of bosons, 
and in fact most of the existing literature concentrates on the case $N\le 2$. For the case of $\RR^d$ there are some exceptions, 
see e.g.\ H\"ubner, Spohn \cite{Huebner94atominteracting}, \cite{Huebner-Spohn-1995} for arbitrary finite $N$ and Zhukov, Minlos \cite{Zhukov-Milnos-1995} for $N=3$, where a rigorous scattering theory was developed for small coupling constants. 

For the case when the underlying domain is a torus,
the spectral properties of a slightly simpler version of $H$ were investigated by Muminov, Neidhardt and Rasulov \cite{MNR-2016-1D}, Albeverio, Lakaev and Rasulov \cite{Al-La-Ra-2007}, Lakaev and Rasulov \cite{La-Ra-2003}, Rasulov~\cite{Rasulov-2016TMF}, see also the references therein. In the case when $v_1$ is a function of a single variable and all parameter functions are continuous (sometimes even real-analytic) with special properties on a closed torus of specific dimension, an analytic description of the essential spectrum was first given in \cite{La-Ra-2003}; a Birman-Schwinger type result was 
first established in \cite{Al-La-Ra-2007}; the finiteness of the discrete spectrum was analysed in \cite{MNR-2016-1D} for $d=1$ with real-analytic 
parameter functions. 

In this paper we establish an analytic description of the essential spectrum, a Birman-Schwinger type result as well as a criterion 
guaranteeing the finiteness of discrete eigenvalues below the bottom of the essential spectrum of $H$.
Compared to earlier work, we achieve these results in a more general setting with \pagebreak weaker conditions on the parameter functions.
For example, the dimension $d\in\NN$ is arbitrary, the parameter function $v_1$ is required to be neither of one variable nor
real-analytic or continuous. In fact, our analysis shows that it suffices to require the boundedness of the functions
$x\mapsto \|v_1(x,\cdot)\|_{L^{2+\varepsilon}(\Omega)}$ and $y\mapsto\|v_1(\cdot,y)\|_{L^{2+4/\varepsilon}(\Omega)}$ on $\Omega$ for some $\varepsilon>0$. Although we consider the case $a<\infty$ throughout the paper, our methods are of local nature 
and also apply to the case $a=\infty$ where $\Omega=\RR^d$.
<
Another difference to earlier work is that we employ more abstract me-thods, allowing for simpler proofs of the first two results mentioned above; in particular, we do not make use of the so-called generalized Friedrichs model in our analysis. However, in spite of being self-adjoint and bounded (with compact underlying domain), the operator matrix $H$ in \eqref{bom:H} is, up to our knowledge, not covered by any of the currently existing abstract results such as \cite{ALMS94}, \cite{AMS-1998}, \cite{MR2363469}, \cite{Kraus-Langer-Tretter-2004}, \cite{Langer-Langer-Tretter-2002}. 

The abstract results on the essential/discrete spectrum in \cite{ALMS94}, \cite{AMS-1998}, \cite{MR2363469} do not apply since the required compactness assumptions on certain auxiliary operators are violated mainly due to the non-compactness of partial-integral operators. The variational principles of \cite{Kraus-Langer-Tretter-2004}, \cite{Langer-Langer-Tretter-2002} do not give information on the finiteness/infiniteness of discrete eigenvalues either because none of the diagonal entries of $H$ has infinitely many discrete eigenvalues. For the present approach, since the last diagonal entry $H_{22}$ of $H$ is a multiplication operator, it turned out to be natural to use singular sequences to describe one part of the essential spectrum and to employ a Schur complement approach to describe the second~part. 

We mention that, in a more concrete setting, the infiniteness of the discrete eigenvalues below the bottom of the essential spectrum of $H$ and corresponding eigenvalue 
asymptotics were also discussed in the literature, see e.g.\ Albeverio, Lakaev and Rasulov \cite{Al-La-Ra-2007}; these results were obtained using the machinery developed in Sobolev \cite{Sobolev-CMP93}. 
To achieve analogous results in~our general setting seems to be very challenging and is beyond the scope of this paper.

The paper is organized as follows. In Section~2 we formulate the hypotheses on the parameter functions, explain the reduction of the problem to a $2\times2$ operator 
matrix and describe the Schur complement of the latter. In Sections~3 and 4 we establish the analytic description of the essential 
spectrum and a Birman-Schwinger type result, respectively. In Section~5, inspired by the methods of \cite{Al-La-Ra-2007}, we derive the criterion for the finiteness of the discrete spectrum below the bottom 
of the essential spectrum of $H$. Section~6 contains some concluding remarks e.g.\ on the limiting case $a=\infty$ and on 
modifications of the assumptions under which our results continue to hold.

The following notations will be used in the sequel: ${\rm{cl}}\,(X)$ denotes the closure of a set $X\subset\RR^d$ in $\RR^d$ (w.r.t. the standard topology); 
for a complex-valued function $\varphi$, we denote by $\varphi^*$ the complex conjugate of $\varphi$; 
$\ran(f)$ and $\essran(f)$ respectively denote the range and the essential range of a (measurable) function $f$ on $\Omega$ or $\Omega^2$, respectively; 
a function $f$ on $\Omega^2$ is called symmetric if $f(x,y) = f^*(y,x)$ for (a.a.\ if applicable) $x,y \in \Omega$.

\pagebreak

\section{The block operator matrix}
Throughout the paper we assume that the parameter functions in \eqref{def.op.entries} satisfy the following hypotheses.
\begin{ass}\label{ass:A}
$w_0\in\RR, v_0\in L^2(\Omega, \CC)$, $w_1\in L^\infty(\Omega, \RR)$, $w_2\in C(\Omega^2)\cap L^\infty(\Omega^2, \RR)$ with $w_2(x,y)=w_2(y,x)$, $x,y\in\Omega$. For some ~$\varepsilon>0$ the functions $x\mapsto v_1(x,\mydot)$ and $y\mapsto v_1(\mydot,y)$ belong to $L^\infty(\Omega, L^{2+\varepsilon}(\Omega,\CC))$ 
and $L^\infty(\Omega, L^{2+4/\varepsilon}(\Omega,\CC))$, respectively, i.e.
\begin{align}\label{assA:ess.sup.cond}
 \esssup_{x\in\Omega}\|v_1(x,\mydot)\|_{L^{2+\varepsilon}(\Omega)}<\infty, \quad  \esssup_{y\in\Omega}\|v_1(\mydot,y)\|_{L^{2+4/\varepsilon}(\Omega)}<\infty.
\end{align}
\end{ass}

\begin{remark}\label{rem:reduction}
(i) Under Assumption~\ref{ass:A}, it is easy to see that $H:\cH\to\cH$ is an everywhere defined bounded self-adjoint operator. 

\smallskip
\noindent
(ii) Since $H_{00}$, $H_{01}$ and $H_{10}$ are finite-rank operators and the essential spectrum as well as the finiteness of (parts of) the discrete spectrum of 
self-adjoint operators are invariant with respect to finite-rank perturbations (see e.g. \cite[Chapter~IX]{Birman-Solomjak-87b}), we can restrict ourselves to studying 
the spectrum of the $2\times 2$ operator matrix 
\begin{equation}
\cA:=
\begin{pmatrix}
H_{11} & H_{12}\\[1.5ex]
H^*_{12} & \!H_{22}
\end{pmatrix}.
\end{equation}
acting in the Hilbert space $\cH_1\oplus\cH_2$. 

\smallskip
\noindent
(iii) Since ${\rm{Vol}}\,(\Omega)<\infty$, H\"older's inequality together with the second condition in \eqref{assA:ess.sup.cond} yields that $v_1(x,\mydot)\in L^2(\Omega, \CC)$ 
for a.a.\ $x\in\Omega$ and
\begin{align}\label{rem3}
\ds\esssup_{x\in\Omega}\|v_1(x,\mydot)\|_{L^2(\Omega)} \leq (2a)^{\frac{\varepsilon d}{4+2\varepsilon}}  
\esssup_{x\in\Omega}\|v_1(x,\mydot)\|_{L^{2+\varepsilon}(\Omega)}<\infty.
\end{align}
\end{remark}

It is easy to check that the adjoint operator $H^*_{12}:\cH_1\to\cH_2$ is given by
\[
 (H^*_{12}f)(x,y) = \frac{1}{2}v_1(x,y)^*f(x)+\frac{1}{2}v_1(y,x)^*f(y), \quad f\in\cH_1,
\]
for a.a.\ $x,y\in\Omega$.

Schur complements have proven to be useful tools when dealing with $2\times 2$ operator matrices (see e.g.\ \cite{Tre08}). The first Schur complement associated with 
the operator matrix $\cA-z$ is given by 
\begin{align*}
 S(z) &= H_{11}-z-H_{12}(H_{22}-z)^{-1}H^*_{12} =: \Delta(z)+K(z)
\end{align*}
for $z\notin\sigma(H_{22})={\rm{cl}}\,(\ran w_2)$ where $\Delta(z):\cH_1\to\cH_1$ is the multiplication operator by the function $\Delta(\mydot;z)$ 
defined as
\begin{equation}\label{def:Delta}
\Delta(x;z) := w_1(x)-z-\frac{1}{2}\int_{\Omega}\frac{|v_1(x,y)|^2}{w_2(x,y)-z} \d y, \quad x\in\Omega,
\end{equation}
and $K(z)\!: \cH_1\to\cH_1$ is the integral operator with kernel $K(\mydot,\mydot;z)$ given by  
\begin{equation}\label{kernel:K}
K(x,y;z) := -\frac{1}{2}\frac{v_1(x,y) v_1(y,x)^*}{w_2(x,y)-z}, \quad (x,y)\in\Omega^2.
\end{equation}

For every $z\!\in\!\RR\setminus {\rm{cl}}\,(\ran w_2)$, the Schur complement $S(z)$ is bounded and self-adjoint in $\cH_1$, the function $\Delta(\mydot; z)$ is real-valued 
and $K(x,y;z)=K(y,x;z)^*$, $x,y\in\Omega$; thus the operators $\Delta(z)$ and $K(z)$ are self-adjoint, too. Moreover, it follows from \eqref{rem3} that 
$\esssup_{x\in\Omega}\Delta(x;z)<\infty$ for every $z\in\RR\setminus{\rm{cl}}\,(\ran w_2)$. Therefore, the multiplication operator $\Delta(z):\cH_1\to\cH_1$ 
is bounded for every $z\in\RR\setminus{\rm{cl}}\,(\ran w_2)$, and hence so is $K(z):\cH_1\to\cH_1$. In fact, we have more than just the boundedness of the integral 
operator $K(z)$ as a corollary of the next lemma.

\begin{lemma}\label{K=Hilber-Schmidt}
Let Assumption~\ref{ass:A} be satisfied. For every $z\in\RR\setminus{\rm{cl}}\,(\ran w_2)$, the integral operator $K(z):\cH_1\to\cH_1$ is Hilbert-Schmidt. 
\end{lemma}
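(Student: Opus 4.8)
The plan is to prove that the kernel $K(\mydot,\mydot;z)$ belongs to $L^2(\Omega^2,\CC)$, since for integral operators membership of the kernel in $L^2(\Omega^2)$ is equivalent to the Hilbert–Schmidt property and the Hilbert–Schmidt norm equals the $L^2$-norm of the kernel. Concretely, I would estimate $\iint_{\Omega^2}|K(x,y;z)|^2\d x\d y$ and show that it is finite, using the two integrability scales built into Assumption~\ref{ass:A}.

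First I would control the denominator in \eqref{kernel:K}. Since $w_2$ is continuous and bounded, the set ${\rm{cl}}\,(\ran w_2)$ is a compact subset of $\RR$; as $z\in\RR\setminus{\rm{cl}}\,(\ran w_2)$, the quantity $\delta:=\dist\big(z,{\rm{cl}}\,(\ran w_2)\big)$ is strictly positive and $|w_2(x,y)-z|\ge\delta$ for all $(x,y)\in\Omega^2$. Consequently $|K(x,y;z)|^2\le \frac{1}{4\delta^2}\,|v_1(x,y)|^2|v_1(y,x)|^2$, and the problem reduces to showing that $\iint_{\Omega^2}|v_1(x,y)|^2|v_1(y,x)|^2\d x\d y<\infty$.

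The heart of the argument — and the step for which the peculiar exponents in Assumption~\ref{ass:A} are tailored — is a H\"older estimate in the variable $x$ for fixed $y$. Viewing $|v_1(x,y)|^2$ and $|v_1(y,x)|^2$ as functions of $x$, I would apply H\"older's inequality with the conjugate exponents $1+2/\varepsilon$ and $1+\varepsilon/2$; one checks the conjugacy via $\frac{\varepsilon}{\varepsilon+2}+\frac{2}{\varepsilon+2}=1$. This is precisely the pairing that makes the two scales $2+4/\varepsilon$ and $2+\varepsilon$ reappear, yielding $\int_\Omega|v_1(x,y)|^2|v_1(y,x)|^2\d x\le \|v_1(\mydot,y)\|_{L^{2+4/\varepsilon}(\Omega)}^2\,\|v_1(y,\mydot)\|_{L^{2+\varepsilon}(\Omega)}^2$ for a.a.\ $y\in\Omega$.

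Finally, I would integrate this bound over $y\in\Omega$, replacing each of the two norm factors by the finite essential suprema in \eqref{assA:ess.sup.cond}; since ${\rm{Vol}}\,(\Omega)<\infty$, the resulting integral is bounded by a constant multiple of ${\rm{Vol}}\,(\Omega)$ and is therefore finite, giving $\|K(z)\|_{\rm HS}^2\le \frac{1}{4\delta^2}\,{\rm{Vol}}\,(\Omega)\,C_1^2C_2^2<\infty$ with $C_1,C_2$ the two essential suprema. The only delicate point is recognizing that the seemingly strange exponents $2+\varepsilon$ and $2+4/\varepsilon$ are exactly H\"older-conjugate after squaring; once this pairing is identified, the remaining estimates are routine.
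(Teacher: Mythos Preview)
Your argument is correct. The overall strategy coincides with the paper's --- bound the denominator by $\dist(z,\ran w_2)>0$ and then prove that $(x,y)\mapsto |v_1(x,y)|^2|v_1(y,x)|^2$ lies in $L^1(\Omega^2)$ --- but the key technical step differs. The paper uses Young's inequality for products,
\[
|v_1(x,y)|^2|v_1(y,x)|^2 \le \tfrac{2}{2+\varepsilon}\,|v_1(x,y)|^{2+\varepsilon} + \tfrac{\varepsilon}{2+\varepsilon}\,|v_1(y,x)|^{2+4/\varepsilon},
\]
to decouple the two factors \emph{additively}, and then integrates each term over $\Omega^2$ separately using the two essential-sup bounds. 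You instead apply H\"older's inequality in the $x$-variable with the conjugate pair $(1+2/\varepsilon,\,1+\varepsilon/2)$ to obtain the \emph{multiplicative} bound $\|v_1(\mydot,y)\|_{L^{2+4/\varepsilon}}^2\|v_1(y,\mydot)\|_{L^{2+\varepsilon}}^2$, and then integrate in $y$. Both routes exploit exactly the same exponent conjugacy (indeed Young's and H\"older's inequalities are two faces of the same duality), and both yield a finite $L^2$-bound on the kernel. Your version has the minor advantage of giving a slightly sharper explicit constant; the paper's version makes the two hypotheses in \eqref{assA:ess.sup.cond} enter symmetrically and independently.
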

\begin{proof}
Let $z\in\RR\setminus{\rm{cl}}\,(\ran w_2)$ be fixed. By Young's inequality, we have  
\begin{align*}
\ds |v_1(x,y)|^2|v_1(y,x)|^2 \leq \frac{2}{2+\varepsilon} |v_1(x,y)|^{2+\varepsilon} + \frac{\varepsilon}{2+\varepsilon}|v_1(y,x)|^{2+4/\varepsilon}
\end{align*}
for a.a.\ $x,y\in\Omega$. Therefore, 
\begin{align*}
 |K(x,y;z)|^2 \leq \frac{2|v_1(x,y)|^{2+\varepsilon} + \varepsilon|v_1(y,x)|^{2+4/\varepsilon}}{4(2+\varepsilon)\dist(z,\ran w_2)^2} 
\end{align*}
for a.a.\ $x,y\in\Omega$. On the other hand, in view of Assumption~\ref{ass:A}, it is easy to see that the following estimates hold
\begin{align*}
& \ds\|v_1\|_{L^{2+\varepsilon}(\Omega^2)}^{2+\varepsilon} \leq (2a)^d \esssup_{x\in\Omega}\|v_1(x,\mydot)\|_{L^{2+\varepsilon}(\Omega)}^{2+\varepsilon}<\infty,\\
& \ds  \ds\|v_1\|_{L^{2+4/\varepsilon}(\Omega^2)}^{2+4/\varepsilon} \leq (2a)^d \esssup_{y\in\Omega}\|v_1(y,\mydot)\|_{L^{2+4/\varepsilon}(\Omega)}^{2+4/\varepsilon}<\infty.
\end{align*}
Hence $K(\mydot,\mydot;z)\in L^2(\Omega^2)$ and thus $K(z)$ is Hilbert-Schmidt.
\end{proof}

\section{Analytic description of the essential spectrum}
The following theorem provides an explicit formula for the essential spectrum of $H$ in terms of the functions $w_2$ and $\Delta$ given by \eqref{def.op.entries} 
and \eqref{def:Delta}. 
\begin{theorem}\label{thm:ess.spec.1}
Let Assumption~\ref{ass:A} be satisfied and let 
\begin{align}\label{m,M}
m:=\!\inf_{(x,y)\in\Omega^2} \! w_2(x,y), \quad M:=\!\sup_{(x,y)\in\Omega^2} \! w_2(x,y).
\end{align}
Then 
\begin{equation}
\sess(H)=\Sigma_1\cup\Sigma_2 
\end{equation}
where 
\begin{equation*}
\Sigma_1 := {\rm{cl}}\,(\ran w_2)=[m,M], \quad \Sigma_2 := {\rm{cl}}\,\{z\in\RR\setminus\Sigma_1: \; 0\in\essran\Delta(\mydot;z)\}.
\end{equation*}
\end{theorem}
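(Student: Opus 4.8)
\emph{Plan.} By Remark~\ref{rem:reduction}(ii) it suffices to compute $\sess(\cA)$, and I would establish the two inclusions by treating $\Sigma_1$ and $\Sigma_2$ separately. For $\Sigma_1$ I would produce Weyl singular sequences directly, exploiting that the last diagonal entry $H_{22}$ is a multiplication operator; for the part of $\sess(\cA)$ lying outside $[m,M]$ I would pass to the first Schur complement $S(z)=\Delta(z)+K(z)$ and use that $K(z)$ is Hilbert--Schmidt (Lemma~\ref{K=Hilber-Schmidt}).

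\emph{The inclusion $\Sigma_1\subseteq\sess(\cA)$.} Since $w_2$ is continuous on the connected set $\Omega^2$, its range is an interval and ${\rm cl}(\ran w_2)=[m,M]=\sigma(H_{22})$. Fix $\la\in(m,M)$ and a point at which $w_2$ attains (essentially) the value $\la$; by continuity there are intervals $I,J\subset\Omega$ on which $|w_2-\la|<\delta$ throughout $(I\times J)\cup(J\times I)$. I would then build a test vector $\Phi=(0,g)\in\cH_1\oplus\cH_2$ whose second component \emph{oscillates in both variables}: taking $h,\tilde h$ normalised and highly oscillatory (hence weakly null) on $I,J$ respectively, set
\[
 g(x,y)=\tfrac{1}{\sqrt2}\bigl(h(x)\tilde h(y)+\tilde h(x)h(y)\bigr),
\]
which is symmetric and of unit norm. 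The estimate $\|(H_{22}-\la)g\|\le\delta$ is immediate from the support condition. The crucial point is that $H_{12}g\to0$: writing $(H_{12}g)(x)$ as a sum of two terms, each is a factor of modulus bounded by a fixed $L^2$ function times an inner integral $\int v_1(x,s)\,(\text{oscillatory})(s)\,\d s$ that tends to $0$ for a.a.\ $x$ by the Riemann--Lebesgue lemma (using $v_1(x,\mydot)\in L^2(\Omega)$ from Remark~\ref{rem:reduction}(iii)); dominated convergence then gives $\|H_{12}g\|\to0$. Letting $\delta\downarrow0$ along the boxes while pushing the oscillation frequency to infinity yields a weakly null, asymptotically orthonormal sequence $\Phi_k$ with $\|(\cA-\la)\Phi_k\|\to0$, so $\la\in\sess(\cA)$; the endpoints $m,M$ follow since $\sess(\cA)$ is closed.

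\emph{The part outside $[m,M]$.} For $z\in\RR\setminus[m,M]=\RR\setminus\sigma(H_{22})$ the operator $(H_{22}-z)^{-1}$ is bounded and $H_{12}(H_{22}-z)^{-1}$ is bounded, so the Frobenius--Schur factorization expresses $\cA-z$ as a product of boundedly invertible upper/lower triangular factors with the block-diagonal operator $\mathrm{diag}(S(z),H_{22}-z)$ in the middle. As $H_{22}-z$ is invertible, $\cA-z$ is Fredholm if and only if $S(z)$ is, i.e.\ $z\in\sess(\cA)$ iff $0\in\sess(S(z))$. Since $S(z)=\Delta(z)+K(z)$ with $K(z)$ compact by Lemma~\ref{K=Hilber-Schmidt} and $\Delta(z)$ the multiplication operator by the real function $\Delta(\mydot;z)$, Weyl's theorem gives $\sess(S(z))=\sess(\Delta(z))=\essran\Delta(\mydot;z)$. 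Hence $z\in\sess(\cA)$ iff $0\in\essran\Delta(\mydot;z)$, which identifies $\sess(\cA)\cap(\RR\setminus[m,M])$ with the set defining $\Sigma_2$.

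\emph{Assembling and the main difficulty.} The two parts give $\sess(\cA)=[m,M]\cup E$ with $E:=\{z\in\RR\setminus[m,M]:0\in\essran\Delta(\mydot;z)\}$. As $\sess(\cA)$ is closed and contains $E$, the set $E$ is relatively closed in $\RR\setminus[m,M]$, whence $\Sigma_2={\rm cl}(E)\subseteq E\cup[m,M]$ and therefore $\Sigma_1\cup\Sigma_2=[m,M]\cup E=\sess(\cA)=\sess(H)$, the last equality by finite-rank invariance (Remark~\ref{rem:reduction}(ii)). I expect the genuine obstacle to be the first part: because the partial-integral operator $H_{12}$ is \emph{not} compact, a singular sequence for $H_{22}$ localised at $\la$ need not satisfy $H_{12}g\to0$, and a naive one-variable oscillation fails after symmetrisation since the un-integrated oscillatory factor survives. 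Oscillating in \emph{both} variables---so that every term of $(H_{12}g)(x)$ carries an oscillatory factor inside the $s$-integration---rescues the argument and keeps it within the mere continuity hypothesis on $w_2$.
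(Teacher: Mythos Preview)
Your argument is correct, and the Schur--complement half (the identification of $\sess(\cA)\setminus[m,M]$ with the zero set of $\essran\Delta(\mydot;z)$ via Lemma~\ref{K=Hilber-Schmidt} and Weyl's theorem) is exactly what the paper does. The real difference is in the singular--sequence construction for $\Sigma_1$. The paper localizes \emph{in space}: it uses normalized characteristic functions of shrinking annuli around $x_0$ and $y_0$ and controls $\|H_{12}\psi_n\|$ by H\"older's inequality with exponent $p=2+\varepsilon$, obtaining the explicit bound $\|H_{12}\psi_n\|\le C\,2^{nd(1/2-1/q)+1}$ with $q=\tfrac{2+\varepsilon}{1+\varepsilon}<2$; this step genuinely uses the $L^{2+\varepsilon}$ part of Assumption~\ref{ass:A}. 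You instead localize \emph{in frequency}: on a fixed box you oscillate in both variables and kill $H_{12}g$ by the Riemann--Lebesgue lemma together with dominated convergence, using only $v_1(x,\mydot)\in L^2(\Omega)$ with a uniform bound (Remark~\ref{rem:reduction}(iii)). Thus your proof of $\Sigma_1\subset\sess(\cA)$ would survive under the weaker hypothesis $\esssup_x\|v_1(x,\mydot)\|_{L^2(\Omega)}<\infty$, while the paper's shrinking--support estimate would not; conversely the paper's bound is quantitative and avoids the diagonalization over $\delta\downarrow0$. Two small points: since $\cH_2$ consists of functions with $g(x,y)=g(y,x)^*$, take $h,\tilde h$ real--valued; and in the diagonal case $x_0=y_0$ replace your symmetrized tensor by $g=h\otimes h$.
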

\begin{proof}
Recall that, by Remark~\ref{rem:reduction}, $\sess(H)=\sess(\cA)$ and thus it suffices to establish that $\sess(\cA)=\Sigma_1\cup\Sigma_2$.
First we show $\Sigma_1 \subset \sess(\cA)$. Since the essential spectrum is closed, we only have to prove the inclusion 
\begin{align}\label{inclusion1}
\{w_2(x,y) :  x,y\in \Omega\} \subset  \sess(\cA).
\end{align}
To this end, let $z_0 \in \{w_2(x,y) :  x,y\in \Omega\}$ be arbitrary. Since $w_2:\Omega^2\to\RR$ is continuous on $\Omega^2$ by Assumption~\ref{ass:A}, it follows 
that $z_0=w_2(x_0,y_0)$ for some $(x_0, y_0) \in \Omega^2$. 

Let $\chi$ be the normalized characteristic function of the annulus 
$\bigl\{x\!\in\! \Omega : \!\frac{1}{2} \leq \|x\| \leq 1\bigr\}$ and define the sequences 
$\{\varphi_n\}_{n \in \NN}$, $\{\phi_n\}_{n \in \NN} \subset\cH_1=L^2(\Omega,\CC)$ by
\begin{align*}
\varphi_n(x) := 2^{\frac{nd}{2}} \chi(2^n(x-x_0)), \quad \phi_n(y) := 2^{\frac{nd}{2}} \chi(2^n(y-y_0)), \quad x,y \in \Omega;
\end{align*}
note that $\varphi_n=\phi_n$ if $x_0=y_0$. 
It is easy to check that
\[
 \supp(\varphi_n) \cap \supp(\varphi_m) = \supp(\phi_n) \cap \supp(\phi_m)=\emptyset
\]
for all $n,m\in\NN$ with $n\neq m$ and that there is $N_0\in\NN$ such that
\[
\|\varphi_n\|_{L^2(\Omega)}=\|\phi_n\|_{L^2(\Omega)}=1, \quad \supp(\varphi_n) \cap \supp(\phi_k)= \emptyset
\] 
for all positive integers $n,k\geq N_0$. So both $\{\varphi_n\}^\infty_{n=N_0}$ and $\{\phi_n\}^\infty_{n=N_0}$ are orthonormal systems in $\cH_1$.

Now consider the sequence $\{\psi_n\}^\infty_{n=N_0}$ defined by 
\begin{equation}
\psi_n(x,y)=\begin{cases}
\varphi_n(x) \phi_n(y) = \varphi_n(x) \varphi_n(y) &\mbox{if } x_0=y_0,\\
\frac{1}{\sqrt{2}}\bigl(\varphi_n(x)\phi_n(y)+\varphi_n(y)\phi_n(x)\bigr) &\mbox{if } x_0\neq y_0,
\end{cases}
\end{equation}
for $x,y\in\Omega$. It is easy to see that the sequence $\{\psi_n\}^\infty_{n=N_0}$ is an orthonormal system in $\cH_2=L^2_{\rm{sym}}(\Omega,\CC)$. Hence the sequence $\{\widetilde{\psi}_n\}^\infty_{n=N_0}$ given by 
\begin{align*}
\widetilde\psi_n(x,y)= \binom{0}{\psi_n(x,y)}, \quad x,y \in \Omega,
\end{align*}
is an orthonormal system in $\cH_1\oplus\cH_2$. Thus, if we show $\|(\cA-z_0) \widetilde\psi_n\|_{\cH} \to 0$ 
as $n\to\infty$, it follows that $\{\widetilde{\psi}_n\}^\infty_{n=N_0}$ is a singular sequence for $\cA-z_0$ and thus $z_0\in\sess(\cA)$, see ~\cite{Reed-Simon-IV}. Note that 
\begin{align*}
\|(\cA-z_0) \widetilde\psi_n\|^2_{\cH}= \|H_{12} \psi_n\|^2_{L^2(\Omega)}+ \|(H_{22}-z_0) \psi_n\|^2_{L^2(\Omega^2)}.
\end{align*}
By construction of the sequence $\{\psi_n\}_{n \in \NN}$, it easily follows that 
\[
 \|(H_{22}-z_0) \psi_n\|_{L^2(\Omega^2)} = \|(w_2-w_2(x_0,y_0))\psi_n\|_{L^2(\Omega^2)} \to 0, \quad n \to \infty,
 \]
so it is left to be shown that 
\begin{equation}\label{sing.seq.H_12}
 \|H_{12} \psi_n\|_{L^2(\Omega)} \to 0, \quad n \to \infty.
\end{equation}
By Assumption~\ref{ass:A}, there are constants $C>0$ and $\varepsilon>0$ such that 
\[
\|v_1(x, \mydot) \|_{L^{2+\varepsilon}(\Omega)} \leq C
\]
for a.a.\ $x \in \Omega$.
Applying H\"older's inequality with $p=2+\varepsilon$ and $q= \frac{p}{p-1}$, we thus obtain 
\begin{align*}
\begin{split}
\left|\int_{\Omega} v_1(x,y) \varphi_n(y) \d y\right| &\leq \| v_1(x, \mydot)\|_{L^p(\Omega)} \|\varphi_n\|_{L^q(\Omega)} 
 \leq C 2^{n d(\frac{1}{2}-\frac{1}{q})}
\end{split}
\end{align*}
for a.a.\ $x \in \Omega$. In the same way it follows that
\begin{align*}
\begin{split}
\left|\int_{\Omega} v_1(x,y) \phi_n(y) \d y\right| \leq C 2^{n d(\frac{1}{2}-\frac{1}{q})} 
\end{split}
\end{align*}	
for a.a.\ $x \in \Omega$.
Therefore, using $\|\varphi_n\|_{L^2(\Omega)}=\|\phi_n\|_{L^2(\Omega)}=1$ and applying the triangle inequality, we easily obtain
\begin{equation}\label{new estimate}
\|H_{12} \psi_n\|_{L^2(\Omega)} \leq C 2^{n d(\frac{1}{2}-\frac{1}{q})+1}, \quad n\geq N_0. 
\end{equation}
This proves \eqref{sing.seq.H_12} because $q= 2-\frac{\varepsilon}{1+\varepsilon}<2$.
%

\vspace{1mm}

Now it remains to be shown that $(\RR\setminus\Sigma_1) \cap \sess(\cA) = \Sigma_2$. To this end, let $z \in \RR \backslash \Sigma_1$ be arbitrary. It is not difficult to 
check that \cite[Theorem~2.4.7]{Tre08} applies and yields 
\begin{equation}\label{ess.equiv1}
z \in \sess(\cA) \quad \Longleftrightarrow \quad 0 \in \sess(S(z)).
\end{equation}
Since $K(z)$ is compact, 
we have $\sess(S(z)) \!=\! \sess(\Delta(z))\!=\!\essran(\Delta(\mydot;z))$. Therefore, by \eqref{ess.equiv1},
\begin{equation*}
z \in \sess(\cA) \quad \Longleftrightarrow \quad 0\in\essran \Delta(\mydot;z) \quad \Longleftrightarrow \quad z \in \Sigma_2.\qedhere
\end{equation*}
\end{proof}

\vspace{2mm}

\begin{remark}
While it is always the case that $\Sigma_1\neq\emptyset$, the following example shows that $\Sigma_2=\emptyset$ may occur. 

Let $d\in\NN$ be arbitrary and let $\Omega=(-a,a)^d$ with $a=2^{(1-d)/d}$ so that ${\rm{vol}}\,(\Omega)=2$. Let $w_2$ be an arbitrary function satisfying Assumption~\ref{ass:A} 
and denote its continuous extension to ${\rm{cl}}\,(\Omega^2)$ also by $w_2$. If $m,M$ are defined as in \eqref{m,M} and we choose the parameter functions $w_1$ and $v_1$ as 
\begin{align*}
 & v_1(x,y) = (w_2(x,y)-m)^{1/2}(M-w_2(x,y))^{1/2}, \quad x,y\in {\rm{cl}}\,(\Omega), \\
 & w_1(x) = m+M-\frac{1}{2}\int_\Omega w_2(x,y) \d y, \quad x\in {\rm{cl}}\,(\Omega),
\end{align*}
then, clearly, 
Assumption~\ref{ass:A} is satisfied and $\Delta(\mydot;m) \equiv \Delta(\mydot;M) \equiv 0$ on ${\rm{cl}}\,(\Omega)$. On the other hand, it is easy to see that 
the function $z\mapsto\Delta(x;z)$ is strictly 
decreasing on $(-\infty,m) \cup (M,\infty)$ for each fixed $x\in {\rm{cl}}\,(\Omega)$. Therefore, for each $z<m$, we have 
$\Delta(x;z)>\Delta(x;m)=0$ for all $x\in {\rm{cl}}\,(\Omega)$ and for each $z>M$, we have $\Delta(x;z)<\Delta(x;M)=0$ for all $x\in {\rm{cl}}\,(\Omega)$. 
Consequently, $\Sigma_2=\emptyset$.
\end{remark}

\section{Birman-Schwinger type principle}
For a bounded self-adjoint operator $A:\cH\to\cH$ and a constant $\la \in \RR$, we define the quantity
\begin{align*}
n(\la ; A) := \sup_{\mathcal{L} \subset \cH} \{ \dim \mathcal{L} : (Au,u) >\la \|u\|_{\cH}^2, u \in \mathcal{L}\} = \dim \mathcal{L}_{(\la,\infty)}(A),
\end{align*}
where $\mathcal{L}_{(\la,\infty)}(A)$ is the spectral subspace of $A$ corresponding to the interval $(\la,\infty)$. Note that, if $n(\la; A)$ is finite, then it is equal to 
the number of the eigenvalues of $A$ larger than $\la$ (counted with multiplicities), see e.g., \cite[Section~IX]{Birman-Solomjak-87b}.
For $\la \leq \min \sess(A)$, we denote by $N(\la;A)$ the number of eigenvalues of $A$ that are less than $\la$; observe that, for $ z < \min \sess(A)$,
\begin{align}
N(z; A)= n(-z; -A).
\end{align}
In the sequel, we will use the so-called Weyl inequality (see e.g.\ \cite{Birman-Solomjak-87b})
\begin{equation}\label{Weyl_iequality}
 n(\la_1+\la_2; V_1+V_2) \leq n(\la_1;V_1)+n(\la_2;V_2)
\end{equation}
for compact self-adjoint operators $V_1$, $V_2:\cH\to\cH$, and real numbers $\la_1,\la_2$.

The following lemma plays a crucial role in the analysis of the discrete spectrum.

\begin{lemma}\label{lem:Schur-discrete}
Let Assumption~\ref{ass:A} be satisfied. For every $z<\min\sess(\cA)$,
\begin{align}\label{Birman-Schwinger-Schur}
N(z;\cA) = N(0; S(z)).
\end{align}
\end{lemma}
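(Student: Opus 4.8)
The plan is to deduce \eqref{Birman-Schwinger-Schur} from the Frobenius--Schur factorisation of $\cA-z$ together with the invariance of the negative spectral index under a bounded, boundedly invertible congruence; the strict positivity of the lower diagonal block $H_{22}-z$ is what makes the $\cH_2$-component drop out.

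First I would record that, by Theorem~\ref{thm:ess.spec.1}, $\min\sess(\cA)\le m=\min\sigma(H_{22})$, so that $z<\min\sess(\cA)$ forces $z<m$ and hence $H_{22}-z\ge(m-z)I>0$. In particular $(H_{22}-z)^{-1}$ is bounded, the Schur complement $S(z)$ is a well-defined bounded self-adjoint operator, and the operator
\begin{equation*}
L:=\begin{pmatrix} I & 0\\ (H_{22}-z)^{-1}H_{12}^* & I\end{pmatrix}
\end{equation*}
is bounded with bounded inverse. A direct computation then yields the factorisation
\begin{equation*}
\cA-z=L^*\begin{pmatrix} S(z) & 0\\ 0 & H_{22}-z\end{pmatrix}L=:L^*\,T\,L .
\end{equation*}

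Next I would rephrase both sides of \eqref{Birman-Schwinger-Schur} in terms of the negative index $n_-(B):=\dim\mathcal{L}_{(-\infty,0)}(B)=n(0;-B)$ of a bounded self-adjoint operator $B$. On the one hand $N(z;\cA)=n_-(\cA-z)$, and on the other hand $N(0;S(z))=n_-(S(z))$; the latter genuinely counts eigenvalues because $0<\min\sess(S(z))$ for $z<\min\sess(\cA)$, which I would check from the equivalence $z\in\sess(\cA)\Leftrightarrow 0\in\essran\Delta(\mydot;z)$ established in the proof of Theorem~\ref{thm:ess.spec.1} together with the strict monotonicity of $z\mapsto\Delta(x;z)$ on $(-\infty,m)$. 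The definition of $n(\mydot;\mydot)$ gives the variational description $n_-(B)=\sup\{\dim\mathcal{L}:(Bu,u)<0,\ u\in\mathcal{L}\}$. Since $\bigl((\cA-z)w,w\bigr)=(T\,Lw,Lw)$ and $w\mapsto Lw$ is a linear bijection of $\cH_1\oplus\cH_2$ that preserves the dimension of subspaces, the two suprema coincide and therefore $n_-(\cA-z)=n_-(T)$.

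Finally, because $H_{22}-z>0$ contributes no negative directions, the block-diagonal form of $T$ gives $n_-(T)=n_-(S(z))+n_-(H_{22}-z)=n_-(S(z))$, and combining the three identities yields \eqref{Birman-Schwinger-Schur}. The one step that needs genuine care is the congruence invariance $n_-(\cA-z)=n_-(T)$ in infinite dimensions; I would settle it precisely through the subspace characterisation of $n_-$ above, where the bounded invertibility of $L$ is exactly what guarantees that $L$ maps subspaces onto subspaces of the same (possibly infinite) dimension.
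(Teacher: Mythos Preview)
Your proposal is correct and follows essentially the same route as the paper: both use the Frobenius--Schur factorisation to write $\cA-z$ as congruent to the block-diagonal operator ${\rm diag}(S(z),H_{22}-z)$ and then read off the equality of negative indices, with the $H_{22}-z$ block contributing nothing since $z<m$. The only differences are cosmetic (you factor $\cA-z=L^*TL$ while the paper writes the equivalent $W(z)=V(z)(\cA-z)V(z)^*$) and that you spell out the congruence-invariance of $n_-$ and the well-definedness of $N(0;S(z))$ more explicitly than the paper does.
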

\begin{proof}
Let $ z < \min \sess(\cA)$ be fixed. Then $z<\min\sigma(H_{22})$ by Theorem~\ref{thm:ess.spec.1}. In the Hilbert space $\cH_1\oplus\cH_2$, we consider the operators 
\begin{align*}
 W(z):={\rm{diag}}(S(z), H_{22}-z), \quad V(z):=\begin{pmatrix}
                                               I & \:-H_{12}(H_{22}-z)^{-1}\\[1ex]
                                               0 & I
                                               \end{pmatrix}.
\end{align*}
Clearly, $V(z):\cH\to\cH$ is bijective and $W(z)=V(z)(\cA-z)V(z)^*$ due to the Frobenius-Schur factorization, see e.g.\ \cite{ALMS94}. Therefore,
\begin{align*}
 N(z;\cA) &= N(0;\cA-z)\\
          &=N(0;V(z)(\cA-z)V(z)^*)\\
          &=N(0;W(z)).
\end{align*}
On the other hand, 
\begin{equation*}
N(0;W(z))=N(0;S(z))+N(0;H_{22}-z)=N(0;S(z)).\qedhere
\end{equation*}
\end{proof}

\vspace{2mm}

\begin{lemma}\label{lem:Delta-positive}
Let Assumption~\ref{ass:A} be satisfied. For every $z < \min \sess(\cA)$, we have 
\[
\ds \essinf_{x \in \Omega} \Delta (x;z) > 0.
\]
\end{lemma}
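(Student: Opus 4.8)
The plan is to read off the positivity directly from the Birman--Schwinger identity of Lemma~\ref{lem:Schur-discrete} together with the fact that $K(z)$ is compact, established in Lemma~\ref{K=Hilber-Schmidt}. Fix $z<\min\sess(\cA)$. Since $\cA$ is bounded and self-adjoint, its spectrum below $\min\sess(\cA)$ consists of isolated eigenvalues of finite multiplicity that cannot accumulate at any point of $(-\infty,\min\sess(\cA))$ (an accumulation point would lie in $\sess(\cA)$) nor escape to $-\infty$ (as $\cA$ is bounded); hence $N(z;\cA)=\dim\mathcal{L}_{(-\infty,z)}(\cA)<\infty$. By Lemma~\ref{lem:Schur-discrete} this gives $N(0;S(z))=\dim\mathcal{L}_{(-\infty,0)}(S(z))<\infty$.

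Next I would identify the bottom of the essential spectrum of $S(z)$. Because $S(z)=\Delta(z)+K(z)$ with $K(z)$ compact by Lemma~\ref{K=Hilber-Schmidt}, invariance of the essential spectrum under compact perturbations yields $\sess(S(z))=\sess(\Delta(z))=\essran\Delta(\mydot;z)$, whose minimum is exactly $\essinf_{x\in\Omega}\Delta(x;z)$. Since the negative spectral subspace $\mathcal{L}_{(-\infty,0)}(S(z))$ is finite-dimensional, $S(z)$ can have no essential spectrum in $(-\infty,0)$: a point of $\sess(S(z))$ in $(-\infty,0)$ would make the spectral projection of a small interval around it infinite-dimensional and thus force $N(0;S(z))=\infty$. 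Consequently $\min\sess(S(z))\ge0$, that is, $\essinf_{x\in\Omega}\Delta(x;z)\ge0$.

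It remains to upgrade this to strict positivity, and here the description of $\Sigma_2$ enters. As $z<\min\sess(\cA)\le m$ we have $z\in\RR\setminus\Sigma_1$, and as $z<\min\sess(\cA)$ we have $z\notin\Sigma_2$; by the definition of $\Sigma_2$ in Theorem~\ref{thm:ess.spec.1} this forces $0\notin\essran\Delta(\mydot;z)$. Since $\Delta(\mydot;z)\in L^\infty(\Omega)$ (it is real-valued and bounded by Remark~\ref{rem:reduction}(iii), using $w_2(x,y)-z\ge m-z>0$), its essential range is a nonempty compact subset of $\RR$ whose minimum $\min\essran\Delta(\mydot;z)=\essinf_{x\in\Omega}\Delta(x;z)$ belongs to the essential range. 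Hence this minimum cannot equal $0$, and combined with the inequality $\ge0$ from the previous step we conclude $\essinf_{x\in\Omega}\Delta(x;z)>0$.

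The main obstacle is conceptual: the constraint $z<\min\sess(\cA)$ feeds into $S(z)$ only through the \emph{finiteness} of a negative spectral subspace, which rules out essential spectrum of $S(z)$ below $0$ and so yields $\essinf_x\Delta(x;z)\ge0$ but not strict positivity; the strict inequality must come separately from the fact that $z\notin\Sigma_2$ excludes $0$ from $\essran\Delta(\mydot;z)$. A secondary technical point is the measure-theoretic identity $\essinf_x\Delta(x;z)=\min\essran\Delta(\mydot;z)\in\essran\Delta(\mydot;z)$, which is precisely what turns ``$0\notin\essran$'' into ``$\essinf\neq0$''. (Alternatively one can bypass Lemma~\ref{lem:Schur-discrete}: the map $z\mapsto\essinf_x\Delta(x;z)$ is continuous and nowhere zero on the connected half-line $(-\infty,\min\sess(\cA))$, using the uniform bound \eqref{rem3}, and tends to $+\infty$ as $z\to-\infty$, hence is positive throughout.)
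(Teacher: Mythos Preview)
Your argument is correct and takes a genuinely different route from the paper. The paper argues by contradiction at the level of \emph{points} of $\Omega$: assuming $\essinf_x\Delta(x;z^*)<0$, it selects a sequence $\{x_n\}\subset\Omega$ with $\Delta(x_n;z^*)<0$, uses the strict monotonicity of $z\mapsto\Delta(x_n;z)$ and the mean value theorem to produce $z_n<z^*$ with $\Delta(x_n;z_n)=0$, extracts a convergent subsequence $z_{n_k}\to z_0\le z^*$ by Bolzano--Weierstrass, and then shows $\Delta(x_{n_k};z_0)\to0$ to force $z_0\in\sess(\cA)$, a contradiction. Your approach instead stays at the operator level: the finiteness of $N(z;\cA)$, transferred to $N(0;S(z))$ via Lemma~\ref{lem:Schur-discrete}, together with the compact perturbation $\sess(S(z))=\essran\Delta(\mydot;z)$ from Lemma~\ref{K=Hilber-Schmidt}, immediately gives $\essinf_x\Delta(x;z)\ge0$; the strict inequality then follows from $z\notin\Sigma_2$. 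What you gain is brevity and a clean conceptual picture that exploits Lemma~\ref{lem:Schur-discrete} (already available at this point); what the paper's approach gains is independence from Lemma~\ref{lem:Schur-discrete}, relying only on elementary calculus and the description of $\Sigma_2$. Your parenthetical alternative---continuity of $z\mapsto\essinf_x\Delta(x;z)$ on the half-line, nowhere vanishing by $z\notin\Sigma_2$, and $\to+\infty$ as $z\to-\infty$---is in fact a streamlined version of the paper's own strategy, lifted from individual points $x_n$ to the essential infimum.
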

\begin{proof}
Suppose, to the contrary, that there exists $z^*<\min\sess(\cA)$ such that 
\[
\essinf_{x \in \Omega} \Delta (x;z^*) \leq 0.
\]
Then we must have
\begin{equation}\label{ess.inf.delta}
 \essinf_{x \in \Omega} \Delta (x;z^*) < 0,
\end{equation}
for otherwise we would have $z^*\in\sess(\cA)$ contradicting $z^*<\min\sess(\cA)$.

By \eqref{ess.inf.delta} and Assumption~\ref{ass:A}, there exists a sequence $\{x_n\}_{n\in\NN}\subset \Omega$ satisfying the conditions
\begin{align}\label{x_n}
\Delta(x_n;z^*)<0, \quad  v_1(x_n,\mydot)\in L^{2+\varepsilon}(\Omega),  \quad n\in\NN.
\end{align}
Consider the sequence of functions $z\mapsto \Delta(x_n;z)$, $n\in\NN$, on $(-\infty,z^*]$. For every fixed $n\in\NN$, it is easy to see that 
\begin{equation}\label{lim-Delta=infty}
  \lim_{z\to-\infty}\Delta(x_n;z)=+\infty,
\end{equation}
and $z\mapsto \Delta(x_n;z)$ is a continuous, strictly decreasing function on $(-\infty, z^*]$~as
\begin{equation}
\frac{\partial}{\partial z}\Delta(x_n;z)= -1-\frac{1}{2}\int_{\Omega}\frac{|v_1(x_n,y)|^2}{(w_2(x_n,y)-z)^2} \d y\leq -1.
\end{equation}
Hence, in view of the first condition in \eqref{x_n}, the mean-value theorem implies that there exists a sequence $\{z_n\}_{n\in\NN}\subset (-\infty,z^*)$ 
with $\Delta(x_n;z_n)=0$ for each $n\in\NN$. On the other hand, \eqref{lim-Delta=infty} implies that the sequence $\{z_n\}_{n\in\NN}$ is bounded from below, too. Therefore, 
by Bolzano-Weierstrass' theorem, there is a subsequence $\{z_{n_k}\}_{k\in\NN}$ converging to some $z_0\in(-\infty, z^*]$. If we write, 
\begin{align*}
 \Delta(x_{n_k};z_0) &= \Delta(x_{n_k};z_0) - \Delta(x_{n_k};z_{n_k}) \\[2ex]
                 &= (z_{n_k}-z_0)\Bigg(1+\frac{1}{2}\int_{\Omega}\frac{|v_1(x_{n_k},y)|^2}{(w_2(x_{n_k},y)-z_0)(w_2(x_{n_k},y)-z_{n_k})} \d y\Bigg),
\end{align*}
it follows from the second relation in \eqref{x_n} (see also Remark~\ref{rem:reduction} (iii)) that the integral in the bracket is finite and thus 
$\Delta(x_{n_k};z_0)\to0$, $k\to\infty$. Therefore, Theorem~\ref{thm:ess.spec.1} shows that $z_0\in\sess(\cA)$, contradicting $z_0<\min\sess(\cA)$.
\end{proof}

It follows from Lemma~\ref{lem:Delta-positive} that the function
 \begin{align}
 x \mapsto \Delta(x;z)^{-1/2}, \quad x \in \Omega,
 \end{align}
is well-defined and bounded for every $ z < \min \sess(\cA)$. Let $T(z)$ be the integral operator with kernel 
\begin{align}
T(x,y;z) := - \Delta(x;z)^{-1/2} K(x,y;z) \Delta(y;z)^{-1/2}, \quad (x,y)\in\Omega^2, 
\end{align}
where $K(\mydot, \mydot; z)$ is defined as in \eqref{kernel:K}.

\begin{proposition}\label{prop:Birman-Schwinger}
Let Assumption~\ref{ass:A} be satisfied and let $ z < \min \sess(\cA)$ be arbitrary. Then $T(z) : \cH_1 \to \cH_1$ is Hilbert-Schmidt and
\begin{align}
N(z; \cA)= n(1; T(z)).
\end{align}
\end{proposition}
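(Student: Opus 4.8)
The plan is to prove the two assertions separately, first the Hilbert--Schmidt property and then the eigenvalue count.

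For the Hilbert--Schmidt property I would argue on the kernel level. By Lemma~\ref{lem:Delta-positive} the quantity $\essinf_{x\in\Omega}\Delta(x;z)$ is strictly positive, so the function $x\mapsto\Delta(x;z)^{-1/2}$ is bounded and the pointwise estimate
\[
 |T(x,y;z)|^2 \le \Bigl(\essinf_{x\in\Omega}\Delta(x;z)\Bigr)^{-2}|K(x,y;z)|^2, \quad (x,y)\in\Omega^2,
\]
holds. Since $K(\mydot,\mydot;z)\in L^2(\Omega^2)$ by Lemma~\ref{K=Hilber-Schmidt}, this yields $T(\mydot,\mydot;z)\in L^2(\Omega^2)$ and hence that $T(z)$ is Hilbert--Schmidt. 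Equivalently, on the operator level $T(z)=-\Delta(z)^{-1/2}K(z)\Delta(z)^{-1/2}$, a product of a Hilbert--Schmidt operator with bounded operators, which lies in the Hilbert--Schmidt ideal. In particular $T(z)$ is compact and self-adjoint, so $n(1;T(z))$ is finite.

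For the eigenvalue count the starting point is Lemma~\ref{lem:Schur-discrete}, which reduces the claim to $N(0;S(z))=n(1;T(z))$. The key is the factorization
\[
 S(z)=\Delta(z)^{1/2}\bigl(I-T(z)\bigr)\Delta(z)^{1/2},
\]
which is valid since $S(z)=\Delta(z)+K(z)$, $\Delta(z)=\Delta(z)^{1/2}\Delta(z)^{1/2}$ and $\Delta(z)^{1/2}T(z)\Delta(z)^{1/2}=-K(z)$. Here $\Delta(z)^{1/2}$, multiplication by $x\mapsto\Delta(x;z)^{1/2}$, is bounded, self-adjoint, positive and boundedly invertible, again by Lemma~\ref{lem:Delta-positive}. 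I would then translate this congruence into the variational definition of the counting functions. Using $N(0;S(z))=n(0;-S(z))$ and setting $w:=\Delta(z)^{1/2}u$, one computes
\[
 (-S(z)u,u)_{\cH_1} = -\bigl((I-T(z))w,w\bigr)_{\cH_1} = (T(z)w,w)_{\cH_1}-\|w\|_{\cH_1}^2 .
\]
Since $u\mapsto\Delta(z)^{1/2}u$ is a linear bijection of $\cH_1$, it maps any subspace $\mathcal{L}$ onto a subspace of the same dimension, and the condition $(-S(z)u,u)>0$ on $\mathcal{L}$ is equivalent to $(T(z)w,w)>\|w\|^2$ on $\Delta(z)^{1/2}\mathcal{L}$. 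Taking the supremum over all such subspaces gives $N(0;S(z))=n(0;-S(z))=n(1;T(z))$, and combining with Lemma~\ref{lem:Schur-discrete} yields the assertion.

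The step I expect to be the main obstacle is purely a matter of careful bookkeeping: verifying that the boundedly invertible congruence $w=\Delta(z)^{1/2}u$ respects the supremum defining $n$, i.e.\ that it preserves the dimension of the relevant ``negative'' subspaces exactly (Sylvester's law of inertia in operator form). Everything else---the factorization and the Hilbert--Schmidt bound---follows directly from Lemmas~\ref{K=Hilber-Schmidt} and~\ref{lem:Delta-positive}.
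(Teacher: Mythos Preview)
Your proof is correct and follows essentially the same route as the paper: the Hilbert--Schmidt property is obtained from $T(z)=-\Delta(z)^{-1/2}K(z)\Delta(z)^{-1/2}$ together with Lemmas~\ref{K=Hilber-Schmidt} and~\ref{lem:Delta-positive}, and the counting identity comes from the congruence $\Delta(z)^{-1/2}S(z)\Delta(z)^{-1/2}=I-T(z)$ combined with Lemma~\ref{lem:Schur-discrete}. The only difference is presentational: the paper records the chain $n(1;T(z))=N(0;I-T(z))=N(0;\Delta(z)^{-1/2}S(z)\Delta(z)^{-1/2})=N(0;S(z))$ and treats the last equality as standard, whereas you spell out the Sylvester inertia step explicitly via the bijection $w=\Delta(z)^{1/2}u$ and the variational definition of $n$.
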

\begin{proof}
Let $ z < \min \sess(\cA)$ be fixed. Lemma~\ref{lem:Delta-positive} implies that $\Delta(z)^{-1/2}:\cH_1\to\cH_1$ is well-defined and positive operator. 
Since $K(z):\cH_1\to\cH_1$ is Hilbert-Schmidt by Lemma~\ref{K=Hilber-Schmidt}, and $\Delta(z)^{-1/2}:\cH_1\to\cH_1$ is bounded, it follows that 
the operator 
\begin{align*}
T(z)= - \Delta(z)^{-1/2} K(z) \Delta(z)^{-1/2}
\end{align*}
is Hilbert-Schmidt, too. Recalling that $S(z)= \Delta(z)+ K(z)$, ~we have   
\begin{align*}
\begin{split}
\Delta(z)^{-1/2} S(z) \Delta(z)^{-1/2} &=I + \Delta(z)^{-1/2} K(z) \Delta(z)^{-1/2} = I- T(z).
\end{split}
\end{align*}
Therefore,
\begin{align*}
n(1, T(z)) &= N(-1; -T(z)) = N(0; I- T(z)) \\
           &= N\bigl(0; \Delta(z)^{-1/2} S(z) \Delta(z)^{-1/2}\bigr)\\
           &= N(0; S(z)).
\end{align*}
Applying Lemma~\ref{lem:Schur-discrete}, we thus obtain
\begin{equation*}
N(z;\cA) = N(0; S(z)) = n(1, T(z)). \qedhere
\end{equation*}
\end{proof}

\section{Criterion for the finiteness of the discrete spectrum below 
	the bottom of the essential spectrum}
For $\delta>0$, denote by $B_{\delta}(0)$ the ball of radius $\delta$ with centre at the origin in $\RR^d$. For $s\geq0$, we define functions $\Phi_s:\Omega^2\to\RR$,
\begin{align}\label{Phi_s}
\Phi_{s}(x,y) =
\begin{cases}
\|x\|^s+\|y\|^s & {\rm{if}} \quad x\in B_{\delta}(0)\times B_{\delta}(0), \\
\hspace{7mm} 1 & {\rm{otherwise}}.
\end{cases}
\end{align}

\begin{ass}\label{ass:B}
	There exist constants $\alpha \geq 0$, $\beta\in\RR$, $C_1,C_2>0$, ~$\delta\!\in\!(0,a)$ and a unique point $(t_0,t_0)\in\Omega^2$ such that, for a.a.\ $x,y\in\Omega$,
	\begin{align}\label{cond:w_2}
	&w_2(x,y)-\min\sess(H) \geq C_1 \Phi_{\alpha}(x-t_0, y-t_0), \\
	&\chi_{B_{\delta}(t_0)}(y)\, |v_1(x,y)|\leq C_2 \Phi_\beta(0,y-t_0). 
	\end{align}
\end{ass}
We denote by $\alpha^*$ and $\beta^*$, respectively, the infimum and the supremum of the values of $\alpha$ and $\beta$ satisfying Assumption~\ref{ass:B}.

\begin{remark}\label{rem:disc.spec}
	If $\min\sess(H)\notin\Sigma_1$, then $\alpha^*=0$ and the function 
	\begin{equation}\label{Delta(E_min)}
	\Delta(\mydot ; \min\sess(H)):\Omega\to [0,\infty)
	\end{equation}
	is well-defined. If $\min\sess(H)\in\Sigma_1$ and Assumption~\ref{ass:B} is satisfied, then the function \eqref{Delta(E_min)} is well-defined provided that $\alpha^*<2\beta^*+d$.
\end{remark} 

\begin{ass}\label{ass:C}
	There exist constants $\gamma \geq0$ and $C_3>0$ such that for $\delta$ as in Assumption~\ref{ass:B}, whenever $\alpha^* < 2\beta^*+d$, then for 
	a.a.\ $x\in\Omega$,
	\begin{align}
	\Delta(x; \min\sess(H))\geq C_3\Phi_{\gamma}(x-t_0,0).
	\end{align}
\end{ass}
We denote by $\gamma^*$ the infimum of the values of $\gamma$ satisfying Assumption~\ref{ass:C}.

\begin{theorem}\label{thm:fin.disc.spec}
	Let Assumptions~\ref{ass:A}, \ref{ass:B} and \ref{ass:C} be satisfied and let
	\begin{equation}\label{cond:alpha,beta,gamma}
	\alpha^*+\gamma^* < 2\beta^*+d.
	\end{equation}
	Then the operator $H$ has a finite number of eigenvalues {\rm{(}}counted with multiplicities{\rm{)}} below $\min\sess(H)$.
\end{theorem}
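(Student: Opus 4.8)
The plan is to count the eigenvalues of $H$ below $E:=\min\sess(H)$ by reducing, via the Birman--Schwinger principle of Proposition~\ref{prop:Birman-Schwinger}, to a uniform bound on the operators $T(z)$, and then to let $z$ increase to the threshold~$E$.

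First I would reduce the problem to the operator $\cA$. By Remark~\ref{rem:reduction}(ii) the finiteness of the discrete spectrum below $E$ is invariant under the finite-rank passage from $H$ to $\cA$, and since $E=\min\sess(\cA)$, the number of eigenvalues of $\cA$ below $E$ equals $\lim_{z\uparrow E}N(z;\cA)$. Hence it suffices to bound $N(z;\cA)$ uniformly for $z<E$. By Proposition~\ref{prop:Birman-Schwinger} this number equals $n(1;T(z))$, and since $T(z)$ is a self-adjoint Hilbert--Schmidt operator, the elementary Chebyshev-type inequality
\[
n(1;T(z))\le \|T(z)\|_{\rm HS}^2
\]
(each eigenvalue exceeding $1$ contributes more than $1$ to $\sum_k\mu_k^2$) reduces everything to a uniform bound on the Hilbert--Schmidt norms $\|T(z)\|_{\rm HS}$.

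The second step is a monotone passage to the threshold. For $z<E\le m$ each of the factors $\Delta(x;z)^{-1}$, $\Delta(y;z)^{-1}$ and $(w_2(x,y)-z)^{-2}$ appearing in $|T(x,y;z)|^2$ is positive and nondecreasing in $z$ (the first two by $\partial_z\Delta\le-1$ together with Lemma~\ref{lem:Delta-positive}, the third trivially). By the monotone convergence theorem,
\[
\lim_{z\uparrow E}\|T(z)\|_{\rm HS}^2=\|T(E)\|_{\rm HS}^2,
\]
where $T(E)$ is the integral operator whose kernel is obtained by putting $z=E$; this is meaningful because the hypothesis $\alpha^*+\gamma^*<2\beta^*+d$ forces $\alpha^*<2\beta^*+d$, so $\Delta(\mydot;E)$ is well-defined by Remark~\ref{rem:disc.spec}. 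Consequently $\|T(z)\|_{\rm HS}^2\le\|T(E)\|_{\rm HS}^2$ for all $z<E$, and the whole theorem is reduced to proving $\|T(E)\|_{\rm HS}^2<\infty$.

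The heart of the argument, and the step I expect to be the main obstacle, is this square-integrability of the threshold kernel. Away from $(t_0,t_0)$ the denominator $w_2-E$ and the function $\Delta(\mydot;E)$ are bounded below (there Assumptions~\ref{ass:B} and~\ref{ass:C} give $\Phi\equiv1$) and $v_1$ lies in the $L^p$-classes of Assumption~\ref{ass:A}, so the kernel is square-integrable there. The only genuine singularity sits at $(t_0,t_0)$, where I would combine the three estimates: choosing admissible exponents $\alpha$ slightly above $\alpha^*$, $\gamma$ slightly above $\gamma^*$ and $\beta$ slightly below $\beta^*$ — which is possible by the definitions of $\alpha^*,\gamma^*$ as infima and $\beta^*$ as a supremum, while keeping $\alpha+\gamma<2\beta+d$ — Assumptions~\ref{ass:B} and~\ref{ass:C} yield, with $u:=\|x-t_0\|$ and $v:=\|y-t_0\|$,
\[
|T(x,y;E)|^2\le \frac{C_2^4}{4C_1^2C_3^2}\,\frac{u^{2\beta-\gamma}\,v^{2\beta-\gamma}}{(u^\alpha+v^\alpha)^2}
\]
on $B_\delta(t_0)\times B_\delta(t_0)$. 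Passing to polar coordinates in each variable reduces the integrability of this bound to the convergence of $\int_0^\delta\!\int_0^\delta u^p v^p(u^\alpha+v^\alpha)^{-2}\,\d u\,\d v$ with $p=2\beta-\gamma+d-1$; splitting into $\{u\le v\}$ and $\{v\le u\}$ and integrating shows that this integral is finite precisely when $\alpha+\gamma<2\beta+d$. The delicate points to handle with care are the mixed regions where only one variable is close to $t_0$ (there the denominator is bounded below, so a one-variable computation using the exponent $\beta-\gamma/2$ and the $L^p$-bound on $v_1$ suffices) and the bookkeeping of the strict inequalities as the exponents are perturbed off their optimal values $\alpha^*,\beta^*,\gamma^*$.
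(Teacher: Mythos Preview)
Your proposal is correct and follows the same overall strategy as the paper: reduce to $\cA$, invoke Proposition~\ref{prop:Birman-Schwinger} to replace $N(z;\cA)$ by $n(1;T(z))$, and then show that $T(E)$ is Hilbert--Schmidt by bounding its kernel using Assumptions~\ref{ass:B} and~\ref{ass:C}.

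The one genuine difference lies in how the passage to the threshold is handled. The paper uses the Weyl inequality
\[
n(1;T(z))\le n(1/2;T(E_{\min}))+n(1/2;T(z)-T(E_{\min}))
\]
together with the continuity of $z\mapsto T(z)$ (via dominated convergence) to conclude $N(E_{\min};\cA)\le n(1/2;T(E_{\min}))<\infty$. You instead observe that $|T(x,y;z)|^2$ is monotone in $z$ and use the cruder bound $n(1;T(z))\le\|T(z)\|_{\rm HS}^2\le\|T(E)\|_{\rm HS}^2$ directly. Your route is more elementary (it avoids the Weyl inequality entirely), while the paper's gives a somewhat sharper final estimate in terms of an eigenvalue count rather than a Hilbert--Schmidt norm; for the purpose of proving finiteness both are equally effective. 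You are also more careful than the paper in two places: you explicitly perturb $\alpha,\beta,\gamma$ off the extremal values $\alpha^*,\beta^*,\gamma^*$ (which are infima/suprema and need not be attained), and you spell out the mixed regions where only one variable is near $t_0$, which the paper leaves implicit.
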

\begin{proof}
	Throughout the proof we adopt the notation $E_{\min}:=\min \sess(\cA)$. Recall that, in view of Remark~\ref{rem:reduction}, 
	$\min\sess(H)=\min\sess(\cA)$ and
	\[
	N(E_{\min};H)<\infty \quad \Longleftrightarrow \quad N(E_{\min};\cA)<\infty.
	\]
	
	By Proposition~\ref{prop:Birman-Schwinger}, $T(z)$ is Hilbert-Schmidt for all $z<\min\sess(\cA)$. Next, we show that $T(E_{\min})$ is Hilbert-Schmidt as well.
	
	It follows from Assumptions~\ref{ass:A}, \ref{ass:B} and \ref{ass:C} that the kernel of $T(E_{\min})$ is square-integrable if the function 
	\begin{equation}
	(x,y) \,\mapsto\, \|x\|^{\beta^*-{\gamma^*}/2} \|y\|^{\beta^*-{\gamma^*}/2}\frac{1}{\|x\|^{\alpha^*}+\|y\|^{\alpha^*}}
	\end{equation}
	is square-integrable over $B_{\delta}(0) \times B_{\delta}(0)$. Passing to generalized polar coordinates, it is easy to see that the latter is equivalent to
	\begin{equation}\label{int.conv.1}
	\ds\int_0^\delta \int_0^\delta \frac{1}{(r_1^{\alpha^*}+r_2^{\alpha^*})^2} \, r_1^{2\beta^*-\gamma^*+d-1} r_2^{2\beta^*-\gamma^*+d-1} \d r_1 \d r_2 <\infty. 
	\end{equation}
	Using the elementary inequality between the arithmetic and geometric means,
	\[
	\frac{r_1^{\alpha^*}+r_2^{\alpha^*}}{2} \geq \sqrt{r_1^{\alpha^*} r_2^{\alpha^*}} ,  
	\]
	it is not difficult to check that \eqref{int.conv.1} holds if 
	\begin{equation*}\label{int.conv.2}
	\int_0^\delta t^{(2\beta^*+d)-(\alpha^*+\gamma^*)-1} \d t < \infty,
	\end{equation*} 
	which, in turn, holds if and only if $\alpha^*+\gamma^* < 2\beta^*+d$. Therefore, $T(E_{\min})$ is Hilbert-Schmidt if \eqref{cond:alpha,beta,gamma} is satisfied. 
	
	Summing up, $T(z)$ is Hilbert-Schmidt for every $z\leq E_{\min}$. Moreover, it is an immediate consequence of Lebesgue's dominated convergence theorem that 
	the map $T(\mydot): (-\infty, E_{\min}] \to L(\cH_1)$ is continuous.
	
	Next, let $z\leq E_{\min}$ be arbitrary. Since $T(z)$ is compact, we obviously have 
	\begin{equation}
	n(1/2; T(z))<\infty.
	\end{equation}
	Using the Weyl inequality \eqref{Weyl_iequality} for the compact self-adjoint operators 
	\[
	V_1=T(E_{\min}), \quad V_2=T(z)-T(E_{\min}),
	\]
	and $\la_1=\la_2=1/2$, we obtain
	\[
	n(1;T(z)) \leq n(1/2; T(E_{\min})) + n(1/2; T(z)-T(E_{\min})).
	\]
	Since $T(\mydot): (-\infty, E_{\min}] \to L(\cH_1)$ is continuous, we thus have
	\begin{align*}
	\lim_{z\nearrow E_{\min}} \!\! n(1;T(z)) & \leq n(1/2; T(E_{\min})) + \lim_{z\nearrow E_{\min}} \!\! n(1/2; T(z)-T(E_{\min}))\\
	&= n(1/2; T(E_{\min})).
	\end{align*}
	This together with Proposition~\ref{prop:Birman-Schwinger} yields
	\begin{equation*}
	N(E_{\min};\cA) = \lim_{z\nearrow E_{\min}} \!\! N(z;\cA) \leq n(1/2; T(E_{\min}))<\infty. \qedhere
	\end{equation*}
\end{proof}

\begin{remark}
	Whenever $\min\sess(H)\notin\Sigma_2$, condition \eqref{cond:alpha,beta,gamma} is sharp in the sense that, if $\alpha^*+\gamma^* \geq 2\beta^*+d$, then the operator $H$ 
	may have an infinite number of eigenvalues below $\min\sess(H)$. This occurs, for example, for $d=1$, $\Omega=[-\pi,\pi]$, and  
	\begin{equation*}
	\begin{array}{l}
	w_1(x)=1+\sin^2(x), \quad v_1(x,y)=\sqrt{\dfrac{3}{\pi}} \sin(x), \\
	w_2(x,y)=\varepsilon(x)+2\varepsilon(x+y)+\varepsilon(y),
	\end{array}
	\quad x,y\in [-\pi,\pi],
	\end{equation*}
	where $\varepsilon(x):=1-\cos(x)$, $x\in[-\pi,\pi]$, see \cite{MNR-2016-1D} for more details.
\end{remark}
\section{Concluding remarks}
We conclude the paper with some remarks on modifications of our assumptions and results, and on the case $a=\infty$.

\medskip

\noindent
{\bf 6.1.} \ The uniqueness of the point $(x_0,y_0)\in\Omega$ in Assumption~\ref{ass:B} was assumed just for simplicity. Theorem~\ref{thm:fin.disc.spec} can be generalized if we assume 
	that there exist finitely many points $(t_j,t_j)$, $j=0,1,\ldots,N$, and constants $\alpha_j\geq0$, $\beta_j\in\RR$, $\gamma_j\geq0$, $j=0,\ldots,N$, $C_1, C_2, C_3>0$ and 
	$\delta>0$ with 
	\[
	\delta< \min_{\substack{k\neq l\\0\leq k,l\leq N}} \frac{1}{2} \dist((t_k,t_k),(t_l,t_l))
	\]
	such that for a.a.\ $x,y\in\Omega$ and each $j=0,1,\ldots,N$,
	\begin{enumerate}
		\smallskip
		\item  $w_2(x,y)-\min\sess(H) \geq C_1 \prod_{j=0}^N \Phi_{\alpha_j}(x-t_j, y-t_j)$,
		\smallskip
		\item $\chi_{B_{\delta}(t_j)}(y) |v_1(x,y)|\leq C_2\Phi_{\beta_j}(0,y-t_j)$,
		\smallskip
		\item $\Delta(x; \min\sess(H))\geq C_3\Phi_{\gamma_j}(x-t_j,0)$
		\smallskip
	\end{enumerate}
	with $\Phi_s$ as in \eqref{Phi_s}. Defining the constants $\alpha_j^*$, $\beta_j^*$ and $\gamma_j^*$ in an analogous way and replacing condition \eqref{cond:alpha,beta,gamma} 
	by 
	\[
	\alpha_j^*+\gamma_j^*<2\beta_j^*+d, \quad j=0,1,\ldots,N,
	\]
	the same analysis as above in a sufficiently small neighborhood of every point $(t_j,t_j)$ shows that the number of eigenvalues below $\min\sess(H)$ remains finite in this case. 
	
	Note that this is no longer true in general if the number of such points is infinite, see \cite{MNR-2016-1D} for an example in the smooth setting.
	
\medskip	

\noindent
{\bf 6.2.} \ We mention that in the previous studies, e.g.\ in \cite{MNR-2016-1D}, it was always assumed that the parameter function $w_2$ has a unique non-degenerate global 
	minimum, which implies that $\alpha^*=2$ in our Assumption~\ref{ass:B}. 
	
	While uniqueness in \cite{MNR-2016-1D} was assumed just for simplicity, our analysis shows that the non-degeneracy of the global minimum in \cite{MNR-2016-1D} is not always needed 
	to guarantee the finiteness of the discrete spectrum below $\min\sess(H)$, at least if $\min\sess(H)\notin\Sigma_2$.
	
\medskip

\noindent
{\bf 6.3.} \ Under assumptions analogous to Assumptions~\ref{ass:B}, \ref{ass:C} and with the same method, 
	one immediately obtains an analogue of Theorem~\ref{thm:fin.disc.spec} guaranteeing the finiteness of discrete eigenvalues above $\max\sess(H)$.
	
\medskip

\noindent
{\bf 6.4.} \ Motivated by the application to spin-boson Hamiltonians on the torus in $\RR^d$
(which was studied e.g.\ in \cite{MNR-2016-1D}), we focused on the case $a<\infty$ throughout the paper. However, our methods are of local nature and thus readily apply to the case of $\Omega=\RR^d$ where $a=\infty$. By requiring $v_1$ to have a compact support in $\RR^2$ and the conditions \eqref{assA:ess.sup.cond} of Assumption~\ref{ass:A} to hold on the support of $v_1$, and 
assuming the rest of the hypotheses in Assumption~\ref{ass:A} as well as in Assumptions~\ref{ass:B}, \ref{ass:C} for $\Omega=\RR^d$, we obtain the same conclusions of Theorems~\ref{thm:ess.spec.1} and \ref{thm:fin.disc.spec}. 

{\small
	\bibliographystyle{acm}
	\bibliography{spin_boson}
}
\end{document}